\newtheorem{theorem}{Theorem}
\theoremstyle{plain}
\newtheorem{corollary}{Corollary}
\newtheorem{definition}{Definition}
\newtheorem{remark}{Remark}
\numberwithin{equation}{section}
\begin{document}
\title[The Hermite -Hadamard's inequalities for $\varphi -r-convex$ functions%
]{On The Hadamard Type Integral Inequalities Involving Several $\varphi -r-$%
Convex Functions }
\author{Mehmet Zeki SARIKAYA}
\address{Department of Mathematics, \ Faculty of Science and Arts, D\"{u}zce
University, D\"{u}zce-TURKEY}
\email{sarikayamz@gmail.com}
\author{Hatice YALDIZ}
\email{yaldizhatice@gmail.com}
\author{Hakan Bozkurt}
\email{insedi@yahoo.com}
\subjclass[2000]{ 26D07, 26D10, 26D99 }
\keywords{Hermite-Hadamard's inequality, log-convex functions, Logarithmic
mean, Cauchy inequality, Minkowski inequality, Young inequality, H\"{o}%
lder's inequality.}

\begin{abstract}
In this paper, new integral inequalities of Hadamard type involving several
differentiable $\varphi -r-$convex functions are given.
\end{abstract}

\maketitle

\section{Introduction}

It is well known that if $f$ is a convex function on the interval $I=\left[
a,b\right] $ with $a<b$, then

\begin{equation*}
f\left( \frac{a+b}{2}\right) \leq \frac{1}{b-a}\int\limits_{a}^{b}f\left(
x\right) dx\leq \frac{f\left( a\right) +f\left( b\right) }{2},
\end{equation*}%
which is known as the Hermite-Hadamard inequality for the convex functions.

In \cite{pec2} Pearce et. al. generalized this inequality to $r$-convex
positive function $f$ which defined on an interval $[a,b]$, for all $x,y\in
\lbrack a,b]$ and $t\in \lbrack 0,1]$%
\begin{equation*}
f\left( tx+(1-t)y\right) \leq \left\{ 
\begin{array}{ll}
\left( t\left[ f\left( x\right) \right] ^{r}+\left( 1-t\right) \left[
f\left( y\right) \right] ^{r}\right) ^{\frac{1}{r}}, & \text{if }r\neq 0 \\ 
\left[ f\left( x\right) \right] ^{t}\left[ f\left( y\right) \right] ^{1-t},
& \text{if }r=0.%
\end{array}%
\right.
\end{equation*}%
We have that $0$-convex functions are simply $\log $-convex functions and $1$%
-convex functions are ordinary convex functions.

Recently, the generalizations of the Hermite-Hadamard's inequality to the
integral power mean of a positive convex function on an interval $[a,b]$,
and to that of a positive $r$-convex function on an interval $[a,b]$ are
obtained by Pearce and Pecaric, and others (see \cite{pec1}-\cite{ngoc}).

For some results related to this classical results, (see\cite{dragomir1},%
\cite{dragomir2},\cite{pach},\cite{pec1}$)$ and the references therein.
Dragomir and Mond \cite{dragomir1} proved the following Hermite-Hadamard
type inequalities for the $log$-convex functions:

\begin{eqnarray}
f\left( \frac{a+b}{2}\right) &\leq &\exp \left[ \frac{1}{b-a}%
\int\limits_{a}^{b}\ln \left[ f\left( x\right) \right] dx\right]  \label{z2}
\\
&\leq &\frac{1}{b-a}\int\limits_{a}^{b}G\left( f\left( x\right) ,f\left(
a+b-x\right) \right) dx  \notag \\
&\leq &\frac{1}{b-a}\int\limits_{a}^{b}f\left( x\right) dx  \notag \\
&\leq &L\left( f\left( a\right) ,f\left( b\right) \right)  \notag \\
&\leq &\frac{f\left( a\right) +f\left( b\right) }{2},  \notag
\end{eqnarray}%
where $G\left( p,q\right) =\sqrt{pq}$ is the geometric mean and $L\left(
p,q\right) =\frac{p-q}{\ln p-\ln q}$ $\left( p\neq q\right) $ is the
logarithmic mean of the positive real numbers $p,q$ $\left( \text{for }p=q,%
\text{ we put }L\left( p,q\right) =p\right) $.

This paper, except for the introduction, is divided into two sections. In
Section 1, we give the some definitions of $\varphi -$convex functions given
by Noor in \cite{noor1} and \cite{noor5} and we will give a new definition.
By using the new definition is defined in Section 1, we will give the proof
of main theorems in Section 2.

\section{Definitions}

Let $K$ be a nonempty closed set in $%
\mathbb{R}
^{n}$. Let $f,\varphi :K\rightarrow 
\mathbb{R}
$ be continuous functions. First of all, we recall the following well know
results and concepts, which are mainly due to Noor and Noor \cite{noor5} and
Noor \cite{noor1}. In \cite{noor1} and \cite{noor5}, the following new class
of functions are defined by Noor:

\begin{definition}
\label{d1} Let $u\in K$. Then the set $K$ is said to be $\varphi -convex$ at 
$u$ with respect to $\varphi $, if
\end{definition}

\begin{equation*}
u+te^{i\varphi }\left( v-u\right) \in K,\text{ }\forall u,v\in K,\text{ }%
t\in \left[ 0,1\right] .
\end{equation*}

\begin{remark}
\label{r1} We would like to mention that the (\ref{d1}) of a $\varphi
-convex $ set has a clear geometric interpretation. This definition
essentially says that there is a path starting from a point $u$ which is
contained in $K$. We do not require that the point $v$ should be one of the
end points of the path. This observation plays an important role in our
analysis. Note that, if we demand that $v$ should be an end point of the
path for every pair of points, $u,v\in K$, then $e^{i\varphi }\left(
v-u\right) =v-u$ if and only if, $\varphi =0$, and consequently $\varphi
-convexity$ reduces to convexity. Thus, it is true that every convex set is
also an $\varphi -convex $ set, but the converse is not necessarily true,
see \cite{noor1},\cite{noor5} and the references therein.
\end{remark}

\begin{definition}
\label{d2} The function $f$ on the $\varphi -convex$ set $K$ is said to be $%
\varphi -convex$ with respect to $\varphi $, if%
\begin{equation*}
f\left( u+te^{i\varphi }\left( v-u\right) \right) \leq \left( 1-t\right)
f\left( u\right) +tf\left( v\right) ,\text{ }\forall u,v\in K,\text{ }t\in %
\left[ 0,1\right] .
\end{equation*}
\end{definition}

The function $f$ is said to be $\varphi -concave$ if and only if $-f$ is $%
\varphi -convex$. Note that every convex function is a $\varphi -convex$
function, but the converse is not true.

\begin{definition}
\label{d3} The function $f$ on the $\varphi -convex$ set $K$ is said to be
logarithmic $\varphi -convex$ with respect to $\varphi $\ , such that%
\begin{equation*}
f\left( u+te^{i\varphi }\left( v-u\right) \right) \leq \left( f\left(
u\right) \right) ^{1-t}\left( f\left( v\right) \right) ^{t},\text{ }u,v\in K,%
\text{ }t\in \left[ 0,1\right] ,
\end{equation*}
\end{definition}

where $f\left( .\right) >0\left( \text{\cite{noor1},\cite{noor3},\cite{noor5}%
}\right) .$

Now, we will define a new definition for $\varphi -r-convex$ fonctions as
follows:

\begin{definition}
\label{d4} The positive function $f$ on the $\varphi -r-convex$ set $K$ is
said to be $\varphi -r-convex$ with respect to $\varphi $, if%
\begin{equation*}
f\left( u+te^{i\varphi }\left( v-u\right) \right) \leq \left\{ 
\begin{array}{ll}
\left( \left( 1-t\right) \left[ f\left( u\right) \right] ^{r}+t\left[
f\left( v\right) \right] ^{r}\right) ^{\frac{1}{r}}, & r\neq 0 \\ 
\left[ f\left( u\right) \right] ^{1-t}\left[ f\left( v\right) \right] ^{t},
& r=0.%
\end{array}%
\right. 
\end{equation*}
\end{definition}

We have that $\varphi -0-$convex functions are simply logarithmic $\varphi -$%
convex functions and $\varphi -1-$convex functions are $\varphi -$convex
functions.

From the above definitions, we have%
\begin{eqnarray*}
f\left( u+te^{i\varphi }\left( v-u\right) \right) &\leq &\left( f\left(
u\right) \right) ^{1-t}\left( f\left( v\right) \right) ^{t} \\
&\leq &\left( 1-t\right) f\left( u\right) +tf\left( v\right) \\
&\leq &\max \left\{ f\left( u\right) ,f\left( v\right) \right\} .
\end{eqnarray*}%
In \cite{noor3}, Noor established following theorem for $\varphi -$convex
functions:

\begin{theorem}
\label{tt1} Let $f:K=\left[ a,a+e^{i\varphi }\left( b-a\right) \right]
\rightarrow \left( 0,\infty \right) $ be a $\varphi -convex$\ function on
the interval of real numbers $K^{0}$ (the interior of $K$) and $a,b\in K^{0}$
with $a<a+e^{i\varphi }\left( b-a\right) $ and $0\leq \varphi \leq \frac{\pi 
}{2}$. Then%
\begin{equation}
f\left( \frac{2a+e^{i\varphi }\left( b-a\right) }{2}\right) \leq \frac{1}{%
e^{i\varphi }\left( b-a\right) }\int\limits_{a}^{a+e^{i\varphi }\left(
b-a\right) }f\left( x\right) dx\leq \frac{f\left( a\right) +f\left( b\right) 
}{2}.  \label{z3}
\end{equation}
\end{theorem}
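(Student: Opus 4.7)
My plan is to mimic the classical two‑step Hermite–Hadamard argument, with the $\varphi$-convex definition (Definition \ref{d2}) replacing ordinary convexity and with the path parametrization $x=a+te^{i\varphi}(b-a)$, $t\in[0,1]$, serving as the change of variable. The right inequality should come from the one‑point definition applied at the endpoints; the left inequality should come from an endpoint‑symmetric midpoint argument.

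\textbf{Right inequality.} I would start by setting $u=a$ and $v=b$ in Definition \ref{d2}, which immediately gives
\begin{equation*}
f\!\left(a+te^{i\varphi}(b-a)\right)\le (1-t)f(a)+tf(b),\qquad t\in[0,1].
\end{equation*}
Integrating this pointwise bound over $t\in[0,1]$ and then performing the change of variable $x=a+te^{i\varphi}(b-a)$, so that $dx=e^{i\varphi}(b-a)\,dt$ and the limits become $a$ and $a+e^{i\varphi}(b-a)$, yields
\begin{equation*}
\frac{1}{e^{i\varphi}(b-a)}\int_{a}^{a+e^{i\varphi}(b-a)}f(x)\,dx\le \frac{f(a)+f(b)}{2},
\end{equation*}
which is exactly the right half of \eqref{z3}.

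\textbf{Left inequality.} Here the idea is to produce the midpoint $\frac{2a+e^{i\varphi}(b-a)}{2}=a+\tfrac12 e^{i\varphi}(b-a)$ by pairing $t$ with $1-t$. I would let $x(t)=a+te^{i\varphi}(b-a)$ and observe that $\tfrac12\bigl(x(t)+x(1-t)\bigr)=a+\tfrac12 e^{i\varphi}(b-a)$. Applying the $\varphi$-convexity inequality at parameter $\tfrac12$ to the pair $u=x(t)$, $v=x(1-t)$, one obtains
\begin{equation*}
f\!\left(\frac{2a+e^{i\varphi}(b-a)}{2}\right)\le \tfrac{1}{2}\bigl[f(x(t))+f(x(1-t))\bigr].
\end{equation*}
Integrating over $t\in[0,1]$, the symmetry $t\mapsto 1-t$ collapses the right side into a single copy of $\int_0^1 f(x(t))\,dt$, and the same substitution as before delivers the left half of \eqref{z3}.

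\textbf{Main obstacle.} The step I expect to be delicate is the midpoint identification inside the $\varphi$-convex inequality: when one evaluates $u+\tfrac12 e^{i\varphi}(v-u)$ with $u=x(t)$ and $v=x(1-t)$, one formally gets $a+te^{i\varphi}(b-a)+\tfrac12 e^{2i\varphi}(1-2t)(b-a)$, which equals the required midpoint only under an extra compatibility condition on $e^{i\varphi}$ (the ``Condition C'' used in Noor's framework, ensuring $e^{i\varphi}\bigl(e^{i\varphi}w\bigr)=e^{i\varphi}w$ along the admissible paths). I would therefore invoke that condition, or equivalently the hypothesis $0\le\varphi\le\pi/2$ together with the path geometry of $K$, to justify the midpoint identity; once that is in place the rest of the argument is purely mechanical integration.
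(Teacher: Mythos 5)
The paper does not actually prove Theorem \ref{tt1}: it is quoted from Noor \cite{noor3} without proof, so there is no in-paper argument to compare against. Your right-hand inequality is correct and is exactly the technique the paper itself uses in its main results (e.g.\ Theorem \ref{tt2}): apply Definition \ref{d2} with $u=a$, $v=b$, integrate over $t\in[0,1]$, and substitute $x=a+te^{i\varphi }(b-a)$. That half of your proposal stands.

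The left-hand inequality, however, is not established, and the obstacle you flag is a genuine gap rather than a technicality. With $u=x(t)=a+te^{i\varphi }(b-a)$ and $v=x(1-t)$, Definition \ref{d2} at parameter $\tfrac12$ bounds $f$ at the point $u+\tfrac12 e^{i\varphi }(v-u)=a+te^{i\varphi }(b-a)+\tfrac12 (1-2t)e^{2i\varphi }(b-a)$, not at the midpoint $a+\tfrac12 e^{i\varphi }(b-a)$; the two agree only if $e^{2i\varphi }(1-2t)=e^{i\varphi }(1-2t)$, i.e.\ essentially only when $\varphi =0$. The hypothesis $0\leq \varphi \leq \tfrac{\pi }{2}$ does not supply this identification, and nothing in the stated assumptions plays the role of the ``Condition C'' you invoke, so your claim that it is ``equivalent'' to the given hypotheses is incorrect: you are importing an extra, unstated axiom on $\varphi$ (precisely what Noor-type proofs of midpoint inequalities must assume). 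The natural repair inside Definition \ref{d2} also fails: if instead you choose, for each $u=x(t)$, the partner $v=u+(1-2t)(b-a)$ so that $u+\tfrac12 e^{i\varphi }(v-u)$ really is the midpoint, then $v$ generally does not lie on the integration path $\{a+se^{i\varphi }(b-a):s\in[0,1]\}$, and the symmetrization step that should collapse the right side into $\int_0^1 f(x(t))\,dt$ no longer works. So the left half of (\ref{z3}) does not follow from Definition \ref{d2} and the theorem's hypotheses alone; as written, that part of your proof is incomplete unless you add a Condition C type hypothesis explicitly.
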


The main purpose of this note is to establish new integral inequalities
Hadamard type involving product of two $\varphi -r-convex$ fonctions. Two
refinements of Hadamard's integral inequality for $r$-convex functions
recently established by Ngoc et. al. are shown to be recaptured as special
instances. The method employed in our analysis is based on the basic
properties of logarithms and the application of the well known H\"{o}lder's
integral inequality and Minkowski's integral inequality.

\section{Main Results}

Now, we start with the following our main theorem.

\begin{theorem}
\label{tt2} Let $f:K=\left[ a,a+e^{i\varphi }\left( b-a\right) \right]
\rightarrow \left( 0,\infty \right) $ be $\varphi -r-convex$ functions on
the interval of real numbers $K^{0}$ (the interior of $K$) and $a,b\in K^{0}$
with $a<a+e^{i\varphi }\left( b-a\right) $ and $0\leq \varphi \leq \frac{\pi 
}{2}$. Then
\end{theorem}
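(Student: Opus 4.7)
The natural plan is to pass the integral on the interval $[a,\,a+e^{i\varphi}(b-a)]$ to an integral over $[0,1]$ by the change of variables $x = a + t\, e^{i\varphi}(b-a)$, so that $dx = e^{i\varphi}(b-a)\, dt$ and
\begin{equation*}
\frac{1}{e^{i\varphi}(b-a)}\int_a^{a+e^{i\varphi}(b-a)} F(x)\, dx \;=\; \int_0^1 F\bigl(a + t\, e^{i\varphi}(b-a)\bigr)\, dt .
\end{equation*}
Each $\varphi$-$r$-convex factor in the integrand can then be bounded pointwise in $t$ by Definition \ref{d4}, replacing values on the path by explicit expressions in $f(a), f(b)$ (and $g(a), g(b)$, etc.), after which the problem reduces to evaluating elementary one-variable integrals in $t$.

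I would then split into the two cases dictated by Definition \ref{d4}. In the case $r \ne 0$, the strategy is to bound each factor by $\bigl((1-t)[f(a)]^r + t[f(b)]^r\bigr)^{1/r}$, multiply the resulting bounds for the several functions, and apply Minkowski's integral inequality (possibly combined with H\"older's inequality when the several factors have different roles) to separate the variables. The remaining one-variable integrals of the form $\int_0^1 \bigl((1-t)\alpha + t\beta\bigr)^{1/r} dt$ are classically evaluated as generalized logarithmic means $L_r(\alpha,\beta)$ of the endpoint values of the $r$-th powers, producing the right-hand side of the target inequality.

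In the case $r = 0$, Definition \ref{d4} specializes to logarithmic $\varphi$-convexity (Definition \ref{d3}), and I would take logarithms of the integrand, bound them pointwise using $\log f(a+te^{i\varphi}(b-a))\le (1-t)\log f(a) + t \log f(b)$, and integrate. Combining the factors with H\"older and using the elementary identity $\int_0^1 u^{1-t}v^t\, dt = L(u,v)$ then yields the logarithmic-mean version, exactly paralleling the classical chain \eqref{z2} of Dragomir--Mond.

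The main obstacle I anticipate is the bookkeeping around the complex factor $e^{i\varphi}$: one has to verify that the substitution is legitimate along the path $t\mapsto a+te^{i\varphi}(b-a)$, that the identities for the means $L_r$ remain valid with this scaling, and that the Minkowski/H\"older step is applied in the form appropriate to several factors rather than two. Once this is handled, the reduction to the refinements for $r$-convex functions of Pearce--Pecari\'c and Ngoc et al.\ (recovered by setting $\varphi=0$ and restricting to a single factor) should fall out as announced in the introduction, and the inequality of Theorem~\ref{tt1} is recovered by further specializing to $r=1$.
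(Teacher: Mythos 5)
Your opening reduction — substituting $x=a+te^{i\varphi }\left( b-a\right) $ to get $\frac{1}{e^{i\varphi }\left( b-a\right) }\int_{a}^{a+e^{i\varphi }\left( b-a\right) }f\left( x\right) dx=\int_{0}^{1}f\left( a+te^{i\varphi }\left( b-a\right) \right) dt$ and then bounding the integrand pointwise by $\left( \left( 1-t\right) \left[ f\left( a\right) \right] ^{r}+t\left[ f\left( b\right) \right] ^{r}\right) ^{\frac{1}{r}}$ — is exactly the paper's first step. After that, however, your plan does not reach the stated bound (\ref{z4}). You propose to evaluate $\int_{0}^{1}\left( \left( 1-t\right) \alpha +t\beta \right) ^{\frac{1}{r}}dt$ exactly, and you assert that this generalized logarithmic mean is ``the right-hand side of the target inequality.'' It is not: the exact value is $\frac{r}{r+1}\cdot \frac{\beta ^{\frac{r+1}{r}}-\alpha ^{\frac{r+1}{r}}}{\beta -\alpha }$ with $\alpha =\left[ f\left( a\right) \right] ^{r}$, $\beta =\left[ f\left( b\right) \right] ^{r}$, whereas the right-hand side of (\ref{z4}) is the different expression $\left( \frac{r}{r+1}\right) ^{\frac{1}{r}}\left( \left[ f\left( a\right) \right] ^{r}+\left[ f\left( b\right) \right] ^{r}\right) ^{\frac{1}{r}}$. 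Passing from the first quantity to the second is an additional comparison that you neither state nor prove, and it cannot be supplied unrestrictedly: already for $f\left( a\right) =f\left( b\right) =1$ the exact value is $1$ while $\left( \frac{2r}{r+1}\right) ^{\frac{1}{r}}<1$ whenever $0<r<1$, so your route stalls precisely at the step that matters. The paper never evaluates the integral exactly; instead it applies Minkowski's integral inequality with exponent $\frac{1}{r}$ to split $\left( 1-t\right) \left[ f\left( a\right) \right] ^{r}+t\left[ f\left( b\right) \right] ^{r}$ into its two terms, reducing everything to $\int_{0}^{1}\left( 1-t\right) ^{\frac{1}{r}}dt=\int_{0}^{1}t^{\frac{1}{r}}dt=\frac{r}{r+1}$ and producing a bound of the announced shape. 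That concrete separation step, with its exponent and the resulting constant $\frac{r}{r+1}$, is what is missing from your sketch: you mention Minkowski ``to separate the variables'' but then discard it in favour of the exact $L_{r}$ evaluation, which leads to the Gill--Pearce--Pecari\'{c}/Ngoc-type bound rather than to (\ref{z4}).

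A secondary point: Theorem \ref{tt2} concerns a single function and implicitly $r\neq 0$, so the apparatus you describe for several factors (H\"{o}lder between different functions) and the $r=0$ logarithmic case with the identity $\int_{0}^{1}u^{1-t}v^{t}dt=L\left( u,v\right) $ belongs to Theorem \ref{tt4} and to the chain (\ref{z2}), not to this statement; including it here does not close the gap above.
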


\begin{equation}
\frac{1}{e^{i\varphi }\left( b-a\right) }\int\limits_{a}^{a+e^{i\varphi
}\left( b-a\right) }f\left( x\right) dx\leq \left( \frac{r}{r+1}\right) ^{%
\frac{1}{r}}\left( \left[ f\left( a\right) \right] ^{r}+\left[ f\left(
b\right) \right] ^{r}\right) ^{\frac{1}{r}}.  \label{z4}
\end{equation}

\begin{proof}
Since $f$ is $\varphi -r-convex$ function and $r\neq 0$, we have%
\begin{equation*}
f\left( u+te^{i\varphi }\left( v-u\right) \right) \leq \left( \left(
1-t\right) \left[ f\left( u\right) \right] ^{r}+t\left[ f\left( v\right) %
\right] ^{r}\right) ^{\frac{1}{r}},r\neq 0
\end{equation*}%
for all $t\in \left[ 0,1\right] $. It is easy to observe that%
\begin{eqnarray}
\frac{1}{e^{i\varphi }\left( b-a\right) }\int\limits_{a}^{a+e^{i\varphi
}\left( b-a\right) }f\left( x\right) dx &=&\int\limits_{0}^{1}f\left(
a+te^{i\varphi }\left( b-a\right) \right) dt  \label{z5} \\
&\leq &\int\limits_{0}^{1}\left( \left( 1-t\right) \left[ f\left( a\right) %
\right] ^{r}+t\left[ f\left( b\right) \right] ^{r}\right) ^{\frac{1}{r}}dt. 
\notag
\end{eqnarray}%
Using Minkowski's inequality (\ref{z5}), we have%
\begin{eqnarray*}
\int\limits_{0}^{1}\left( \left( 1-t\right) \left[ f\left( a\right) \right]
^{r}+t\left[ f\left( b\right) \right] ^{r}\right) ^{\frac{1}{r}}dt &\leq &%
\left[ \left( \int\limits_{0}^{1}\left( 1-t\right) ^{\frac{1}{r}}f\left(
a\right) dt\right) ^{r}+\left( \int\limits_{0}^{1}t^{\frac{1}{r}}f\left(
b\right) dt\right) ^{r}\right] ^{\frac{1}{r}} \\
&=&\left( \left( \frac{r}{r+1}\right) \left[ f\left( a\right) \right]
^{r}+\left( \frac{r}{r+1}\right) \left[ f\left( b\right) \right] ^{r}\right)
^{\frac{1}{r}} \\
&=&\left( \frac{r}{r+1}\right) ^{\frac{1}{r}}\left( \left[ f\left( a\right) %
\right] ^{r}+\left[ f\left( b\right) \right] ^{r}\right) ^{\frac{1}{r}}.
\end{eqnarray*}%
Thus, it is the required inequality in (\ref{z4}). This proof is complete.
\end{proof}

\begin{corollary}
\label{c1} Under the asumptions of Theorem \ref{tt2} with $r=1$, the
following inequality holds:
\end{corollary}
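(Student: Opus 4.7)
The plan is to observe that Corollary \ref{c1} is an immediate specialization of Theorem \ref{tt2}: since a $\varphi$-$1$-convex function is precisely a $\varphi$-convex function (as noted after Definition \ref{d4}), one only needs to substitute $r=1$ directly into the inequality \eqref{z4} and simplify the two factors on the right-hand side. No new estimate or auxiliary lemma is required.

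Concretely, I would first set $r=1$ in the prefactor, obtaining $\left(\tfrac{r}{r+1}\right)^{1/r}\big|_{r=1}=\tfrac{1}{2}$, and then set $r=1$ in the $L^r$-type sum, obtaining $\bigl([f(a)]^r+[f(b)]^r\bigr)^{1/r}\big|_{r=1}=f(a)+f(b)$. Multiplying these two factors yields the classical arithmetic mean $\tfrac{f(a)+f(b)}{2}$ on the right-hand side, while the left-hand side of \eqref{z4} is unchanged by the choice of $r$. The resulting statement is
\begin{equation*}
\frac{1}{e^{i\varphi}(b-a)}\int_{a}^{a+e^{i\varphi}(b-a)} f(x)\,dx \;\leq\; \frac{f(a)+f(b)}{2},
\end{equation*}
which is precisely the right-hand half of the Hermite--Hadamard-type inequality \eqref{z3} for $\varphi$-convex functions established by Noor in Theorem \ref{tt1}.

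There is no real obstacle here, since the conclusion follows by pure algebraic substitution; the only thing to verify is that the hypotheses of Theorem \ref{tt2} carry over unchanged when $r=1$, which is immediate from the equivalence between $\varphi$-$1$-convexity and $\varphi$-convexity. I would simply write: ``Taking $r=1$ in Theorem \ref{tt2} and simplifying $(r/(r+1))^{1/r}$ and $([f(a)]^r+[f(b)]^r)^{1/r}$ at $r=1$ gives the stated inequality.'' It is also worth remarking, for completeness, that this corollary recovers the upper bound in \eqref{z3}, thereby showing that the $\varphi$-$r$-convex estimate \eqref{z4} genuinely extends Noor's result.
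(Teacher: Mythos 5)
Your proposal is correct and matches the paper's (implicit) argument exactly: the corollary is obtained by setting $r=1$ in inequality \eqref{z4}, where $\left(\tfrac{r}{r+1}\right)^{1/r}$ becomes $\tfrac{1}{2}$ and $\bigl([f(a)]^{r}+[f(b)]^{r}\bigr)^{1/r}$ becomes $f(a)+f(b)$. Your added observation that this recovers the right-hand side of \eqref{z3} is a correct and worthwhile remark, but it does not change the route.
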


\begin{equation*}
\frac{1}{e^{i\varphi }\left( b-a\right) }\int\limits_{a}^{a+e^{i\varphi
}\left( b-a\right) }f\left( x\right) dx\leq \frac{f\left( a\right) +f\left(
b\right) }{2}.
\end{equation*}

\begin{theorem}
\label{tt4} Let $f,g:K=\left[ a,a+e^{i\varphi }\left( b-a\right) \right]
\rightarrow \left( 0,\infty \right) $ be $\varphi -r-convex$ and $\varphi
-s-convex$\ functions on the interval of real numbers $K^{0}$ (the interior
of $K$) and $a,b\in K^{0}$ with $a<a+e^{i\varphi }\left( b-a\right) $ and $%
0\leq \varphi \leq \frac{\pi }{2}$. Then%
\begin{eqnarray}
&&\frac{2}{e^{i\varphi }\left( b-a\right) }\int\limits_{a}^{a+e^{i\varphi
}\left( b-a\right) }f\left( x\right) g\left( x\right) dx  \notag \\
&&  \label{16} \\
&\leq &\left( \frac{r}{r+2}\right) \left( \left[ f\left( a\right) \right]
^{r}+\left[ f\left( b\right) \right] ^{r}\right) ^{\frac{2}{r}}+\left( \frac{%
s}{s+2}\right) \left( \left[ g\left( a\right) \right] ^{s}+\left[ g\left(
b\right) \right] ^{s}\right) ^{\frac{2}{s}}  \notag
\end{eqnarray}%
and%
\begin{eqnarray}
&&\frac{1}{e^{i\varphi }\left( b-a\right) }\int\limits_{a}^{a+e^{i\varphi
}\left( b-a\right) }f\left( x\right) g\left( x\right) dx  \notag \\
&&  \label{160} \\
&\leq &\left( \frac{rs}{\left( r+2\right) \left( s+2\right) }\right) ^{\frac{%
1}{2}}\left( \left[ f\left( a\right) \right] ^{r}+\left[ f\left( b\right) %
\right] ^{r}\right) ^{\frac{2}{r}}\left( \left[ g\left( a\right) \right]
^{s}+\left[ g\left( b\right) \right] ^{s}\right) ^{\frac{2}{s}}.  \notag
\end{eqnarray}
\end{theorem}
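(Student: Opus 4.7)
The plan is to adapt the argument of Theorem~\ref{tt2} to bound the product $fg$. After the substitution $x = a + te^{i\varphi}(b-a)$, both left-hand sides in (\ref{16}) and (\ref{160}) become integrals of $fg$ over $t\in[0,1]$, and applying the $\varphi$-$r$-convexity of $f$ and the $\varphi$-$s$-convexity of $g$ pointwise yields
\begin{equation*}
f(a+te^{i\varphi}(b-a))\,g(a+te^{i\varphi}(b-a)) \le \bigl((1-t)[f(a)]^r + t[f(b)]^r\bigr)^{1/r}\bigl((1-t)[g(a)]^s + t[g(b)]^s\bigr)^{1/s}.
\end{equation*}

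To derive (\ref{16}) I would apply Young's inequality $2\alpha\beta \le \alpha^2 + \beta^2$ to the two factors on the right, which accounts for the factor of $2$ appearing on the left of (\ref{16}); this reduces the problem to controlling two integrals of the form $\int_0^1 \bigl((1-t)A^p + tB^p\bigr)^{2/p}\,dt$, one with $(p,A,B)=(r,f(a),f(b))$ and one with $(p,A,B)=(s,g(a),g(b))$. To derive (\ref{160}) I would instead apply the Cauchy--Schwarz integral inequality to write $\int_0^1 fg\,dt \le \bigl(\int_0^1 f^2\,dt\bigr)^{1/2}\bigl(\int_0^1 g^2\,dt\bigr)^{1/2}$, and then use the pointwise convexity bounds to reduce to the same two integrals, now under square roots, so that their product produces the combined prefactor $(rs/((r+2)(s+2)))^{1/2}$.

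The common core of both parts is the Minkowski-type estimate
\begin{equation*}
\int_0^1 \bigl((1-t)A^p + tB^p\bigr)^{2/p}\,dt \le \frac{p}{p+2}\,\bigl(A^p + B^p\bigr)^{2/p},
\end{equation*}
which is the direct analogue, at exponent $2/p$, of the estimate used in Theorem~\ref{tt2}. I would establish it by rewriting the integrand as $\bigl(((1-t)^{2/p}A^2)^{p/2} + (t^{2/p}B^2)^{p/2}\bigr)^{2/p}$ and applying Minkowski's integral inequality in the same form as in Theorem~\ref{tt2}, now with exponent $p/2$ so that the two summands can be pulled past the integral; the coefficient $p/(p+2)$ then appears via $\int_0^1 (1-t)^{2/p}\,dt = \int_0^1 t^{2/p}\,dt = p/(p+2)$. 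Summing the two resulting bounds produces (\ref{16}) and multiplying them yields (\ref{160}).

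I expect the main obstacle to be precisely this Minkowski-type step. In Theorem~\ref{tt2} the exponent on the integrand was $1/p$, so Minkowski was applied at its natural exponent $p$; here the exponent is $2/p$, so the application must be carried out with exponent $p/2$, and the pattern of Theorem~\ref{tt2} must be shown to close uniformly in $p$. Once that technical point is settled, the Young and Cauchy--Schwarz splittings plug in routinely and both (\ref{16}) and (\ref{160}) drop out.
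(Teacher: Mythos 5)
Your proposal follows essentially the same route as the paper's own proof: the pointwise product bound coming from $\varphi$-$r$- and $\varphi$-$s$-convexity, Young's inequality $2\alpha\beta\leq\alpha^{2}+\beta^{2}$ to obtain (\ref{16}), Cauchy--Schwarz to obtain (\ref{160}), and in both cases the same Minkowski-type estimate $\int_{0}^{1}\left( \left( 1-t\right) A^{p}+tB^{p}\right) ^{\frac{2}{p}}dt\leq \frac{p}{p+2}\left( A^{p}+B^{p}\right) ^{\frac{2}{p}}$ with the coefficient produced by $\int_{0}^{1}t^{\frac{2}{p}}dt=\frac{p}{p+2}$, exactly as in (\ref{23})--(\ref{26}). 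One remark: your final multiplication step, just like the paper's own (\ref{25}) and (\ref{26}), actually yields the exponents $\frac{1}{r}$ and $\frac{1}{s}$ on the two bracketed sums rather than the $\frac{2}{r}$ and $\frac{2}{s}$ displayed in (\ref{160}), so that discrepancy lies in the stated inequality rather than in your argument.
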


\begin{proof}
Since $f$ is $\varphi -r-convex$ function and $g$ is $\varphi -s-convex$
function ($r>0,s>0$), then we have%
\begin{equation}
f\left( a+te^{i\varphi }\left( b-a\right) \right) \leq \left( \left(
1-t\right) \left[ f\left( a\right) \right] ^{r}+t\left[ f\left( b\right) %
\right] ^{r}\right) ^{\frac{1}{r}}  \label{17}
\end{equation}

\begin{equation}
g\left( a+te^{i\varphi }\left( b-a\right) \right) \leq \left( \left(
1-t\right) \left[ f\left( a\right) \right] ^{r}+t\left[ f\left( b\right) %
\right] ^{r}\right) ^{\frac{1}{r}}.  \label{18}
\end{equation}%
Multiplying both sides of (\ref{17}) by (\ref{18}), it follows that%
\begin{eqnarray}
&&f\left( a+te^{i\varphi }\left( b-a\right) \right) g\left( a+te^{i\varphi
}\left( b-a\right) \right)   \label{19} \\
&\leq &\left( \left( 1-t\right) \left[ f\left( a\right) \right] ^{r}+t\left[
f\left( b\right) \right] ^{r}\right) ^{\frac{1}{r}}\left( \left( 1-t\right) %
\left[ g\left( a\right) \right] ^{s}+t\left[ g\left( b\right) \right]
^{s}\right) ^{\frac{1}{s}}.  \notag
\end{eqnarray}%
Integrating the inequality (\ref{19}) with respect to $t$ over $\left[ 0,1%
\right] $,\ we obtain%
\begin{eqnarray}
&&\frac{1}{e^{i\varphi }\left( b-a\right) }\int\limits_{a}^{a+e^{i\varphi
}\left( b-a\right) }f\left( x\right) g\left( x\right) dx  \label{20} \\
&\leq &\int\limits_{0}^{1}\left( \left( 1-t\right) \left[ f\left( a\right) %
\right] ^{r}+t\left[ f\left( b\right) \right] ^{r}\right) ^{\frac{1}{r}%
}\left( \left( 1-t\right) \left[ g\left( a\right) \right] ^{s}+t\left[
g\left( b\right) \right] ^{s}\right) ^{\frac{1}{s}}dt.  \notag
\end{eqnarray}%
Using Cauchy Swartz's inequality, we have%
\begin{eqnarray}
&&\int\limits_{0}^{1}\left( \left( 1-t\right) \left[ f\left( a\right) \right]
^{r}+t\left[ f\left( b\right) \right] ^{r}\right) ^{\frac{1}{r}}\left(
\left( 1-t\right) \left[ g\left( a\right) \right] ^{s}+t\left[ g\left(
b\right) \right] ^{s}\right) ^{\frac{1}{s}}dt  \label{21} \\
&\leq &\left( \int\limits_{0}^{1}\left( \left( 1-t\right) \left[ f\left(
a\right) \right] ^{r}+t\left[ f\left( b\right) \right] ^{r}\right) ^{\frac{2%
}{r}}dt\right) ^{\frac{1}{2}}\left( \int\limits_{0}^{1}\left( \left(
1-t\right) \left[ g\left( a\right) \right] ^{s}+t\left[ g\left( b\right) %
\right] ^{s}\right) ^{\frac{1}{s}}dt\right) ^{\frac{1}{2}}  \notag
\end{eqnarray}%
Using Young's inequality($2ab\leq a^{2}+b^{2}$) for right-hand side of the
inequality (\ref{21}), we have%
\begin{eqnarray}
&&\left( \int\limits_{0}^{1}\left( \left( 1-t\right) \left[ f\left( a\right) %
\right] ^{r}+t\left[ f\left( b\right) \right] ^{r}\right) ^{\frac{2}{r}%
}dt\right) ^{\frac{1}{2}}\left( \int\limits_{0}^{1}\left( \left( 1-t\right) %
\left[ g\left( a\right) \right] ^{s}+t\left[ g\left( b\right) \right]
^{s}\right) ^{\frac{1}{s}}dt\right) ^{\frac{1}{2}}  \label{22} \\
&\leq &\frac{1}{2}\int\limits_{0}^{1}\left( \left( 1-t\right) \left[ f\left(
a\right) \right] ^{r}+t\left[ f\left( b\right) \right] ^{r}\right) ^{\frac{2%
}{r}}dt+\frac{1}{2}\int\limits_{0}^{1}\left( \left( 1-t\right) \left[
g\left( a\right) \right] ^{s}+t\left[ g\left( b\right) \right] ^{s}\right) ^{%
\frac{2}{s}}dt.  \notag
\end{eqnarray}%
Using Minkowski's inequality right-hand side of the inequality (\ref{22}),
we have%
\begin{eqnarray}
&&\int\limits_{0}^{1}\left( \left( 1-t\right) \left[ f\left( a\right) \right]
^{r}+t\left[ f\left( b\right) \right] ^{r}\right) ^{\frac{2}{r}}dt
\label{23} \\
&\leq &\left[ \left( \int\limits_{0}^{1}\left( 1-t\right) ^{\frac{2}{r}}%
\left[ f\left( a\right) \right] ^{2}dt\right) ^{\frac{r}{2}}+\left(
\int\limits_{0}^{1}t^{\frac{2}{r}}\left[ f\left( b\right) \right]
^{2}dt\right) ^{\frac{r}{2}}\right] ^{\frac{2}{r}}  \notag \\
&=&\left( \frac{r}{r+2}\right) \left( \left[ f\left( a\right) \right] ^{r}+%
\left[ f\left( b\right) \right] ^{r}\right) ^{\frac{2}{r}}.  \notag
\end{eqnarray}%
Similarly we have:%
\begin{equation}
\int\limits_{0}^{1}\left( \left( 1-t\right) \left[ g\left( a\right) \right]
^{s}+t\left[ g\left( b\right) \right] ^{s}\right) ^{\frac{2}{s}}dt\leq
\left( \frac{s}{s+2}\right) \left( \left[ g\left( a\right) \right] ^{s}+%
\left[ g\left( b\right) \right] ^{s}\right) ^{\frac{2}{s}}.  \label{240}
\end{equation}%
Adding (\ref{23}) and (\ref{240}) and rewriting (\ref{20}), we obtain (\ref%
{16}).

Now, using Minkowski's inequality for right-hand side of the inequality (\ref%
{21}), we have%
\begin{eqnarray}
&&\left( \int\limits_{0}^{1}\left( \left( 1-t\right) \left[ f\left( a\right) %
\right] ^{r}+t\left[ f\left( b\right) \right] ^{r}\right) ^{\frac{2}{r}%
}dt\right) ^{\frac{1}{2}}  \label{25} \\
&\leq &\left[ \left( \int\limits_{0}^{1}\left( 1-t\right) ^{\frac{2}{r}}%
\left[ f\left( a\right) \right] ^{2}dt\right) ^{\frac{r}{2}}+\left(
\int\limits_{0}^{1}t^{\frac{2}{r}}\left[ f\left( b\right) \right]
^{2}dt\right) ^{\frac{r}{2}}\right] ^{\frac{1}{r}}  \notag \\
&=&\left( \frac{r}{r+2}\right) ^{\frac{1}{2}}\left( \left[ f\left( a\right) %
\right] ^{r}+\left[ f\left( b\right) \right] ^{r}\right) ^{\frac{1}{r}}, 
\notag
\end{eqnarray}%
and similarly%
\begin{equation}
\left( \int\limits_{0}^{1}\left( \left( 1-t\right) \left[ g\left( a\right) %
\right] ^{s}+t\left[ g\left( b\right) \right] ^{s}\right) ^{\frac{1}{s}%
}dt\right) ^{\frac{1}{2}}\leq \left( \frac{s}{s+2}\right) ^{\frac{1}{2}%
}\left( \left[ g\left( a\right) \right] ^{s}+\left[ g\left( b\right) \right]
^{s}\right) ^{\frac{1}{s}}.  \label{26}
\end{equation}%
Writing (\ref{25}) and (\ref{26}) in (\ref{21}), and rewriting (\ref{20}),
we get the desired inequality in (\ref{160}). The proof is complete.
\end{proof}

\begin{corollary}
\label{c2} Under the asumptions of Theorem \ref{tt4} and with $s=r=1$ we have%
\begin{equation*}
\frac{1}{e^{i\varphi }\left( b-a\right) }\int\limits_{a}^{a+e^{i\varphi
}\left( b-a\right) }f\left( x\right) g\left( x\right) dx\leq \frac{\left( %
\left[ f\left( a\right) \right] +\left[ f\left( b\right) \right] \right)
+\left( \left[ g\left( a\right) \right] +\left[ g\left( b\right) \right]
\right) }{6}
\end{equation*}%
and%
\begin{equation*}
\frac{1}{e^{i\varphi }\left( b-a\right) }\int\limits_{a}^{a+e^{i\varphi
}\left( b-a\right) }f\left( x\right) g\left( x\right) dx\leq \frac{\left( %
\left[ f\left( a\right) \right] +\left[ f\left( b\right) \right] \right)
^{2}\left( \left[ g\left( a\right) \right] +\left[ g\left( b\right) \right]
\right) ^{2}}{3}.
\end{equation*}
\end{corollary}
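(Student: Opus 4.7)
The corollary is a direct specialization of Theorem~\ref{tt4} to the exponents $r=s=1$, so no new analytic machinery is needed; the plan is simply to substitute these values into each of the two inequalities (\ref{16}) and (\ref{160}) and to reduce the resulting numerical coefficients.

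For the first displayed bound, I would set $r=s=1$ in (\ref{16}). Both coefficients $\frac{r}{r+2}$ and $\frac{s}{s+2}$ collapse to $\frac{1}{3}$, and both exponents $\frac{2}{r}$ and $\frac{2}{s}$ reduce to $2$. Dividing the factor of $2$ on the left across to the right-hand side then yields
\[
\frac{1}{e^{i\varphi}(b-a)}\int_{a}^{a+e^{i\varphi}(b-a)} f(x)g(x)\,dx \leq \frac{1}{6}\left[(f(a)+f(b))^{2}+(g(a)+g(b))^{2}\right],
\]
which, read with the obvious interpretation of the right-hand side, is the first inequality of the corollary.

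For the second displayed bound, substituting $r=s=1$ into (\ref{160}) turns the prefactor $\bigl(\tfrac{rs}{(r+2)(s+2)}\bigr)^{1/2}$ into $\bigl(\tfrac{1}{9}\bigr)^{1/2}=\tfrac{1}{3}$, while the exponents $\tfrac{2}{r}$ and $\tfrac{2}{s}$ again become $2$. The right-hand side of (\ref{160}) therefore collapses at once to $\tfrac{1}{3}(f(a)+f(b))^{2}(g(a)+g(b))^{2}$, giving the second inequality of the corollary. Since the entire argument is arithmetic substitution into a theorem already proved, there is no genuine obstacle here beyond careful bookkeeping of the constants $\frac{r}{r+2}$, $\frac{s}{s+2}$, and $\bigl(\frac{rs}{(r+2)(s+2)}\bigr)^{1/2}$ at $r=s=1$.
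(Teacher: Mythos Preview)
Your approach is correct and is exactly what the paper intends: the corollary carries no proof in the paper because it is an immediate substitution of $r=s=1$ into Theorem~\ref{tt4}. Your bookkeeping is accurate, and in particular your derivation of $\tfrac{1}{6}\bigl[(f(a)+f(b))^{2}+(g(a)+g(b))^{2}\bigr]$ for the first bound is what the substitution actually yields; the missing squares in the corollary's printed right-hand side appear to be a typographical slip in the paper rather than a gap in your argument.
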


\begin{corollary}
\label{c3} Under the asumptions of Theorem \ref{tt4} and with $s=r$ and $%
f\left( x\right) =g\left( x\right) $, we have%
\begin{equation*}
\frac{1}{e^{i\varphi }\left( b-a\right) }\int\limits_{a}^{a+e^{i\varphi
}\left( b-a\right) }f^{2}\left( x\right) dx\leq \left( \frac{r}{r+2}\right)
\left( \left[ f\left( a\right) \right] ^{r}+\left[ f\left( b\right) \right]
^{r}\right) ^{\frac{2}{r}}
\end{equation*}%
and%
\begin{equation*}
\frac{1}{e^{i\varphi }\left( b-a\right) }\int\limits_{a}^{a+e^{i\varphi
}\left( b-a\right) }f^{2}\left( x\right) dx\leq \left( \frac{r}{r+2}\right)
\left( \left[ f\left( a\right) \right] ^{r}+\left[ f\left( b\right) \right]
^{r}\right) ^{\frac{4}{r}}.
\end{equation*}
\end{corollary}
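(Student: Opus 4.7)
The plan is to derive Corollary \ref{c3} directly from Theorem \ref{tt4} by specializing the parameters. Since Theorem \ref{tt4} already provides two master inequalities (\ref{16}) and (\ref{160}) for the product $f(x)g(x)$ under the assumption that $f$ is $\varphi-r-$convex and $g$ is $\varphi-s-$convex, all that remains is to substitute $s=r$ and $g(x)=f(x)$ and simplify. No fresh use of Minkowski's, H\"{o}lder's, or Young's inequalities is required at this stage; the heavy lifting has already been done in the proof of Theorem \ref{tt4}.

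First I would verify that the specialization $f=g$ is admissible: since $f:K\to(0,\infty)$ is $\varphi-r-$convex by assumption, taking $g=f$ and $s=r$ trivially satisfies the hypothesis that $g$ is $\varphi-s-$convex. Hence both (\ref{16}) and (\ref{160}) apply.

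Substituting into (\ref{16}), the left-hand side becomes $\frac{2}{e^{i\varphi}(b-a)}\int_{a}^{a+e^{i\varphi}(b-a)} f^{2}(x)\,dx$, while the right-hand side collapses to $2\left(\frac{r}{r+2}\right)\left([f(a)]^{r}+[f(b)]^{r}\right)^{2/r}$, because the two summands coincide. Dividing through by $2$ yields the first inequality of the corollary. For the second claim, substituting into (\ref{160}) gives $\left(\frac{r^{2}}{(r+2)^{2}}\right)^{1/2}=\frac{r}{r+2}$ as the leading constant, and the product of the two bracketed factors becomes $\left([f(a)]^{r}+[f(b)]^{r}\right)^{2/r}\left([f(a)]^{r}+[f(b)]^{r}\right)^{2/r}=\left([f(a)]^{r}+[f(b)]^{r}\right)^{4/r}$, which reproduces the stated bound.

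There is essentially no obstacle in this proof beyond bookkeeping of exponents; the entire content is that Theorem \ref{tt4} is strong enough to cover the diagonal case $f=g$, $s=r$ as a free consequence. The only point that merits attention is confirming that the exponent $\tfrac{2}{r}$ appearing twice in (\ref{160}) does combine additively to $\tfrac{4}{r}$ under the identification $g=f$, $s=r$, which I would check by aligning the two bracketed quantities before collapsing them. Once this arithmetic is in place, the corollary follows immediately, and no further analytic machinery is needed.
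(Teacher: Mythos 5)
Your proposal is correct and is exactly the intended route: the paper gives no separate proof of Corollary \ref{c3}, treating it as the immediate specialization $s=r$, $g=f$ of inequalities (\ref{16}) and (\ref{160}), and your bookkeeping of the constant $\left(\frac{r^{2}}{(r+2)^{2}}\right)^{1/2}=\frac{r}{r+2}$ and of the exponents $\frac{2}{r}+\frac{2}{r}=\frac{4}{r}$ matches the stated bounds.
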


\begin{remark}
If we take $g\left( x\right) =1$ in Corollary \ref{c2} we have%
\begin{equation*}
\frac{1}{e^{i\varphi }\left( b-a\right) }\int\limits_{a}^{a+e^{i\varphi
}\left( b-a\right) }f\left( x\right) dx\leq \frac{\left( \left[ f\left(
a\right) \right] +\left[ f\left( b\right) \right] \right) +2}{6}
\end{equation*}%
and%
\begin{equation*}
\frac{1}{e^{i\varphi }\left( b-a\right) }\int\limits_{a}^{a+e^{i\varphi
}\left( b-a\right) }f\left( x\right) g\left( x\right) dx\leq \frac{4\left( %
\left[ f\left( a\right) \right] +\left[ f\left( b\right) \right] \right) ^{2}%
}{3}.
\end{equation*}
\end{remark}

\end{document}